\def\BState{\State\hskip-\ALG@thistlm}
\newcommand{\bZer}{\boldsymbol{0}}
\newcommand{\bOne}{\boldsymbol{1}}
\newcommand{\bu}{\boldsymbol{u}}
\newcommand{\bv}{\boldsymbol{v}}
\newcommand{\bp}{\boldsymbol{p}}
\newcommand{\bq}{\boldsymbol{q}}
\newcommand{\Zl}{\mathbb{Z}_{\ell}}
\newcommand{\Zlt}{\mathbb{Z}_{\ell}^{\times}}
\newcommand{\bA}{\boldsymbol{A}}
\newcommand{\bH}{\boldsymbol{H}}
\newcommand{\bI}{\boldsymbol{I}}
\newcommand{\PSD}{\text{PSD}}
\newcommand{\bmu}{\boldsymbol{\mu}}
\newcommand{\nchoosek}[2]{\left(\begin{matrix}
#1 \\ #2
\end{matrix} \right)}
\newtheorem{thm}{Theorem}[section]
\newtheorem{crly}[thm]{Corollary}
\newtheorem{dfn}[thm]{Definition}
\newtheorem*{thm*}{Theorem}
\begin{document}

\begin{frontmatter}

%% Title, authors and addresses

\title{Legendre pair of length $77$ using complementary binary matrices with fixed marginals}

\author[AFIT]{Jonathan~S.~Turner}%\corref{cor1}
\ead{jonathan.turner@afit.edu}
\author[WLU]{Ilias~S.~Kotsireas}
\ead{ikotsire@wlu.ca}
\author[AFIT]{Dursun~A.~Bulutoglu}
\ead{dursun.bulutoglu@afit.edu}
\author[AFIT]{Andrew~J.~Geyer}
\ead{andrew.geyer@afit.edu}

\address[AFIT]{Department of Mathematics and Statistics, Air Force Institute of Technology,\\Wright-Patterson Air Force Base, Ohio 45433, USA}
\address[WLU]{Department of Physics
and Computer Science, Wilfrid Laurier University, Waterloo, Ontario N2L 3C5, CANADA}
\cortext[cor1]{Corresponding author}
\journal{Designs Codes and Cryptography}

%\fntext[ref:AFOSR]{Research supported by the Air Force Office of Scientific Research}

\begin{abstract}
%% Text of abstract
We provide a search method for Legendre pairs of composite length based on generating binary matrices with fixed row and column sums from compressed, complementary integer vectors.  This approach yielded the first construction of a Legendre pair of length $77$, as well as the first exhaustive generation of Legendre pairs of length $55$.  
\end{abstract}

\begin{keyword}
Fixed marginal \sep Legendre pair \sep Fourier transform
%% keywords here, in the form: keyword \sep keyword

%% MSC codes here, in the form: \MSC code \sep code
%% or \MSC[2008] code \sep code (2000 is the default)

\end{keyword}

\end{frontmatter}

%%
%% Start line numbering here if you want
\section{Background}
\label{Sec:Background}

Two vectors, $\bu$ and $\bv$, indexed by $\Zl$  are called {\em complementary sequences} if and only if there exists some $\lambda$ such that :
\begin{equation}\label{eqn:Complementary}
    \sum_{j=0}^{\ell-1}v_jv_{j+g}+u_ju_{j+g}=\lambda \quad  \forall  g\neq 0, \ g\in\Zl,
\end{equation}
see \cite{Fletcher2001}.

The {\em density} of a \{0,1\} vector, $\bv$, of length $\ell$ is $\kappa=\sum_{j=0}^{\ell}v_j$. Two $\{0,1\}$ complementary sequences of length $\ell$ constitute a $\{0,1\}-${\em Legendre pair} (LP) if $\kappa=\lambda=(\ell+1)/2$.  Common transformations of LP are listed for completeness along with the corresponding constants.  $\{0,1\}-$LP with $\kappa=(\ell-1)/2$ have $\lambda=(\ell-3)/2$.  Transforming to $\{-1,1\}-$LP require $\kappa=\pm 1$ with  $\lambda=-2$.

A \textit{Hadamard matrix} $\bH$ is a $n \times n$ matrix of $\pm 1$'s such that $\bH \bH^{\top}=n\bI$. It easy to show that $n$ must be divisible by $4$ for such a matrix to exist.  There exists a construction for Hadamard matrices with size $n=2\ell+2$ based on LP.  Just as Hadamard matrices are conjectured to exist for all $n$ divisible by $4$, LP are conjectured to exist for all odd $\ell$ \cite{balonin2020three}.  The smallest unresolved case for LP has long been $\ell=77$ \cite{Fletcher2001}.  Furthermore, exhaustive searches have only been conducted for $\ell\leq 45$ \cite{Fletcher2001}.  

A {\em circulant shift} of a vector $\bv$ by $j\in \Zl$, denoted by $c_j(\bv)$, is the permutation of indices such that $\left(c_j(\bv)\right)_{g}=v_{g-j}$.  A {\em necklace} is an equivalence class of vectors of length $\ell$ under circulant shifts, $j\in\Zl$ \cite{Dokovic2015, Sawada2013}.  Let $\Zlt =\{j\in\Zl\,\mid  \,\gcd(j,\ell)=1\}$ be the multiplicative group of $\Zl$.  A {\em decimation} of a vector $\bv$ by $k\in \Zlt $, denoted by $d_k(\bv)$, is the permutation of indices such that $\left(d_k(\bv)\right)_{g}=v_{k^{-1}g}$ where $k^{-1}$ denotes the multiplicative inverse of $k$ in $\Zlt $.  A {\em decimation class} is an equivalence class of vectors under circulant shifts and decimations \cite{Fletcher2001}.  For each $\bu'$ in the decimation class of $\bu$, there exists some $\bv'$ in the decimation class of $\bv$ that is complementary to $\bu'$ if and only if $\bu$ and $\bv$ are complementary sequences \cite{Fletcher2001}.  We call $(\bv,\bu)$ and $(\bv',\bu')$ {\em equivalent} complementary sequences.  

Computational searches for LP focus on discovering new necessary constraints for generating vectors satisfying equation~\eqref{eqn:Complementary}, or similarly applying constraints that only remove equivalent LP.  A constraint called the {\em PSD-test} significantly reduces the search space \cite{Fletcher2001}.  The discrete Fourier transform (DFT) of $\bv$ is $\bmu$, where $\mu_k=\sum_{j=0}^{\ell-1}v_j\omega^{jk}$ for ${k=\{0,1,\dots,\ell-1\}}$ and $\omega_{\ell}=e^{2\pi i/\ell}$.  The {\em power spectral density} (PSD) of a vector $\bv$ with DFT $\bmu$ is $\text{PSD}(\bv)$ where 
\begin{equation*}
    \text{PSD}(\bv,j)=\mu_j\bar{\mu}_j \quad  \text{for each } \ j\in\Zl.
\end{equation*}
Let $\bu$ have DFT $\boldsymbol{\nu}$.  By the Weiner-Khinchin theorem, equation~\eqref{eqn:Complementary} transforms to
\begin{equation}\label{eqn:CompPSD}
    \sum_{j=0}^{\ell-1}\mu_j\bar{\mu}_{j+g}+\nu_j\bar{\nu}_{j+g}=PSD(\bv,g)+PSD(\bu,g)=\gamma \quad  \forall  g\neq 0, \ g\in\Zl,
\end{equation}
where $\gamma=(\ell+1)/2$ $(\gamma=2\ell+2)$ for $\{0,1\}-$LP ($\{-1,1\}-$ LP) \cite{Fletcher2001}.  The PSD-test follows directly from equation~\eqref{eqn:CompPSD} and states if $\bv$ forms an LP with some $\bu$, then $PSD(\bv,g)<\gamma$ for all $g\in\Zl/\{0\}$ \cite{Fletcher2001}.  

Recently, a set of constraints was developed that enforces equation \eqref{eqn:Complementary} on indices $g\not\in\Zl^\times$ based on $\delta-$modular compression.  For a vector, $\bv$, of length, $\ell=\delta_1\delta_2$, the {\it $\delta_1$-compression} of $\bv$ is vector $\bq$ of length $\delta_1$ such that $q_g=\sum_{j=0}^{\delta_2-1}v_{g+j\delta_1}$ for $g\in\mathbb{Z}_{\delta_1}$ \cite{Dokovic2015compression}.  We also say $\bv$ $\delta_1$-compresses to $\bq$.  It follows if $\bu$ and $\bv$ are complementary sequences with $\delta_1$-compressions $\bp$ and $\bq$, then $\bp$ and $\bq$ are also complementary sequences \cite{Dokovic2015compression}. That is, they satisfy equations~\eqref{eqn:Complementary} and \eqref{eqn:CompPSD}.  Moreover, 
\begin{equation} \label{eqn:CompConstPAF}
     \sum_{j=0}^{\delta_1-1}p_jp_{j+g}+q_jq_{j+g}=\delta_2\lambda \quad  \forall  g\neq 0, \ g\in\mathbb{Z}_{\delta_1} 
\end{equation}
and
\begin{equation} \label{eqn:CompConstPAF0}     
     \sum_{j=0}^{\delta_1-1}p_j^2+q_j^2=2\delta_1\sum_{k=0}^{\ell-1}v_i^2=2\delta_1\kappa \quad  \forall  g\neq 0,
\end{equation}
or equivalently,
\begin{equation}\label{eqn:CompConstPSD}
     PSD(\bp,g)+PSD(\bq,g)=\gamma\quad  \forall  g\neq 0, \ g\in\mathbb{Z}_{\delta_1},
\end{equation}
see \cite{Dokovic2015compression}.  

This constraint set essentially divides the problem into two stages.  The first is the generation of all complementary, integer vectors of length $\delta_1$ with elements in $\{0,1,\dots,(\ell/\delta_1)\}$.  The second stage is the {\em decompression} of these compressed pairs, followed by a search for binary complementary sequences.  This relation is used to great effect in the search for complementary sequences known as periodic Golay pairs \cite{Dokovic2015}.

We expand the concept of $\delta$-modular compression to increase the set of constraints when $\ell=\delta_1\delta_2$ with $\gcd(\delta_1,\delta_2)=1$.  This is the case for the remaining unsolved cases for LP existence with $\ell<200$, excluding $169=13^2$.  The cases for
{\small{\begin{equation*}
    \ell\in\{77,85,87,115,117,129,133,145,147,159,161,169,175,177,185,187,195\}
\end{equation*}}}
constitute the current list of unsolved cases \cite{balonin2020three}.  The result of this expansion are pairs of binary matrices with fixed row and column sums, also known as binary matrices with fixed marginals \cite{brualdi1980matrices}.  We prove various features on the spaces of these matrices with respect to decimation classes to further reduce the space of compressed complementary sequences.  We conclude with the first exhaustive generation of LP of size  $\ell=55$ and the first discovery of a size $\ell=77$ LP.
 
 \section{$\delta$-Modular compression}

The benefits of compression are most emphasized by PSD.  Notice equation~\eqref{eqn:CompConstPSD} does not alter $\gamma$, and does not require any additional constraint on the $0$th index, as in equation~\eqref{eqn:CompConstPAF0}.  This is because PSD values are {\em preserved} by $\delta$-modular compression.  The PSD vector of $\bv$ contains the PSD vector of its $\delta$-modular compression~\cite{Dokovic2015compression}. To prove this and facilitate understanding, it is expedient to first prove a stronger result regarding the DFT.    

\begin{thm}
\label{thm:CompPresMu}
Let $\bv$ be a vector of length $\ell=\delta_1\delta_2$, and $\bq$ be the $\delta_1$-compression of $\bv$.  Let $\bmu$ be the DFT of $\bv$, and $\boldsymbol{\nu}$ be the DFT of $\bq$.  Then $\nu_g=\mu_{g\delta_2}$
\end{thm}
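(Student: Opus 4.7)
The plan is to unfold both DFTs directly from their definitions and match them via a change of indexing that reflects the compression rule $q_h=\sum_{j=0}^{\delta_2-1} v_{h+j\delta_1}$. The hope is that there is essentially no real obstacle here: the result should drop out of the identity $\omega_\ell^{\delta_2}=\omega_{\delta_1}$ together with a single reindexing of the sum defining $\mu_{g\delta_2}$.

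Concretely, first I would write out
\[
\mu_{g\delta_2}=\sum_{k=0}^{\ell-1} v_k\,\omega_\ell^{k g\delta_2}
\]
and observe that $\omega_\ell^{k g\delta_2}=e^{2\pi i k g\delta_2/\ell}=e^{2\pi i kg/\delta_1}=\omega_{\delta_1}^{kg}$, so that $\mu_{g\delta_2}=\sum_{k=0}^{\ell-1} v_k\,\omega_{\delta_1}^{kg}$. Next, I would partition the index $k\in\{0,\ldots,\ell-1\}$ by writing $k=h+j\delta_1$ with $h\in\{0,\ldots,\delta_1-1\}$ and $j\in\{0,\ldots,\delta_2-1\}$; this is a bijection because $\ell=\delta_1\delta_2$. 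Since $\omega_{\delta_1}^{(h+j\delta_1)g}=\omega_{\delta_1}^{hg}$, the double sum separates as
\[
\mu_{g\delta_2}=\sum_{h=0}^{\delta_1-1}\omega_{\delta_1}^{hg}\sum_{j=0}^{\delta_2-1} v_{h+j\delta_1}.
\]

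At this point I would invoke the definition of the compression to recognize the inner sum as $q_h$, so that
\[
\mu_{g\delta_2}=\sum_{h=0}^{\delta_1-1} q_h\,\omega_{\delta_1}^{hg}=\nu_g,
\]
which is exactly the claim. The only mildly delicate point is bookkeeping with the two roots of unity $\omega_\ell$ and $\omega_{\delta_1}$, and making sure the reindexing $k\leftrightarrow(h,j)$ is carried out cleanly; once the identity $\omega_\ell^{\delta_2}=\omega_{\delta_1}$ is established, the rest is a direct rearrangement of a finite sum with no convergence or measure-theoretic subtleties to worry about.
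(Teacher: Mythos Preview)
Your proof is correct and is essentially identical to the paper's own argument: both expand $\mu_{g\delta_2}$, use $\omega_\ell^{\delta_2}=\omega_{\delta_1}$, reindex the length-$\ell$ sum via $k=h+j\delta_1$, and identify the inner sum as $q_h$ to obtain $\nu_g$. The only differences are in the choice of dummy variable names.
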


\begin{proof}
  By definition, $\mu_{g\delta_2}=\sum_{j=0}^{\ell-1}v_j\omega_{\ell}^{g\delta_2j}$.  Notice ${\omega_{\ell}^{\delta_2}=e^{2\pi i \delta_2/\ell}=e^{2\pi i / \delta_1}=\omega_{\delta_1}}$.  Then $\mu_{g\delta_2}=\sum_{j=0}^{\ell-1}v_j\omega_{\delta_1}^{gj}$.  By definition, $\omega_{\delta_1}^{j}=\omega_{\delta_1}^{(j+\delta_1)}$  and $$\mu_{g\delta_2}=\sum_{j=0}^{\delta_1-1}\omega_{\delta_1}^{gj}\left(\sum_{k=0}^{\delta_2-1}v_{k\delta_1+j}\right).$$  Since $\bq$ is the $\delta_1$-compression of $\bv$, then $q_j=\sum_{k=0}^{\delta_2-1}v_{k\delta_1+j}$.  
  Therefore, ${\mu_{g\delta_2}=\sum_{j=0}^{\delta_1-1}\omega_{\delta_1}^{gj}q_j=\nu_g}$.
\end{proof}

\begin{crly}\label{crly:preservePSD}
Let vectors $\bu$ and $\bv$ of length $\ell=\delta_1\delta_2$ be complementary sequences with PSD constant $\gamma$. If $\bp$ and $\bq$ are the respective $\delta_1$-compressions of $\bu$ and $\bv$, then $\bp$ and $\bq$ are complementary sequences with PSD constant $\gamma$. 
\end{crly}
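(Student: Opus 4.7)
The plan is to obtain the corollary as an almost immediate consequence of Theorem~\ref{thm:CompPresMu}, by transporting the PSD identity (equation~\eqref{eqn:CompPSD}) from $(\bu,\bv)$ down to their compressions via the relation between DFTs. First I would apply Theorem~\ref{thm:CompPresMu} twice: once to $\bu$ with DFT $\bnu$, whose $\delta_1$-compression $\bp$ has DFT $\bnu'$ satisfying $\nu'_g = \nu_{g\delta_2}$; and once to $\bv$ with DFT $\bmu$, whose $\delta_1$-compression $\bq$ has DFT $\bmu'$ satisfying $\mu'_g = \mu_{g\delta_2}$.

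Taking complex modulus squared of each identity then gives
\[
 \text{PSD}(\bp,g) = \nu'_g \overline{\nu'_g} = \nu_{g\delta_2} \overline{\nu_{g\delta_2}} = \text{PSD}(\bu,g\delta_2),
\]
and similarly $\text{PSD}(\bq,g) = \text{PSD}(\bv,g\delta_2)$ for every $g \in \mathbb{Z}_{\delta_1}$. Next, I would invoke equation~\eqref{eqn:CompPSD}, which holds by hypothesis for the complementary pair $(\bu,\bv)$ at every nonzero index of $\Zl$, evaluated at the index $g\delta_2$: this yields $\text{PSD}(\bu,g\delta_2) + \text{PSD}(\bv,g\delta_2) = \gamma$ for every $g$ such that $g\delta_2 \not\equiv 0 \pmod{\ell}$.

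The only bookkeeping detail I would verify explicitly is the index range. For $g \in \{1,2,\ldots,\delta_1-1\}$, the product $g\delta_2$ lies in $\{\delta_2, 2\delta_2,\ldots,(\delta_1-1)\delta_2\}$, which are all strictly between $0$ and $\ell = \delta_1\delta_2$, hence nonzero modulo $\ell$. Combining this with the previous two displays gives $\text{PSD}(\bp,g) + \text{PSD}(\bq,g) = \gamma$ for every nonzero $g \in \mathbb{Z}_{\delta_1}$, which is exactly the PSD form of the complementary condition for the pair $(\bp,\bq)$ with constant $\gamma$.

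There is essentially no obstacle here: the corollary is a direct substitution argument on top of the theorem, and the only subtlety is the small index check to confirm that the hypotheses of equation~\eqref{eqn:CompPSD} apply at the shifted indices $g\delta_2$. The main conceptual content — that the DFT of the compression agrees with the original DFT on the sublattice of multiples of $\delta_2$ — has already been absorbed into Theorem~\ref{thm:CompPresMu}, so the corollary amounts to repackaging that statement in the PSD language used downstream.
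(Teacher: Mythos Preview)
Your proposal is correct and is precisely the argument the paper intends: the paper itself simply states that Corollary~\ref{crly:preservePSD} follows directly from Theorem~\ref{thm:CompPresMu}, without spelling out the details. You have filled in exactly those details (passing from DFT to PSD by taking modulus squared, evaluating equation~\eqref{eqn:CompPSD} at the indices $g\delta_2$, and checking these are nonzero), so your approach is the same as the paper's, just made explicit.
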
  

Corollary~\ref{crly:preservePSD} follows directly from Theorem~\ref{thm:CompPresMu}.  The benefit of this approach arises from the simpler problem of generating complementary positive integer vectors with appropriate density.  These sequences are decompressed into binary vectors satisfying $\delta_1-1$ of the $\ell-1$ equality constraints given by equation~\eqref{eqn:CompPSD}, as well as the required density.  It is well known if $\bu$ and $\bv$ are complementary, then $\bu$ and $c_j(\bv)$ are also complementary for any $j$.   The equivalence class of circulant shifts of a vector is known as a {\em necklace}.  Each cyclic shift of $\bv$ is distinct when $\gcd\left(\ell,\kappa \right)=1$, as is the case with LP~\cite{Sawada2013}.   It follows that $\gcd(\delta_1,\kappa)=1$ for any $\delta_1$ dividing $\ell$, and each compressed necklace has exactly $\delta_1$ distinct vectors.  Thus, reducing compressed vectors to necklace representatives will not preclude any LP.  

This is partially true for decimation class representatives as well.  Reducing to decimation class representatives of the compressed vectors will not preclude any decimation class forming an LP with another.  However, the resultant vectors from decompression may require decimations to form the LP.

\begin{thm}\label{thm:DecClassRep}
Let $\bv$ be a vector of length $\ell=\delta_1\delta_2$ such that $\gcd(\delta_1,\delta_2)=1$, and $\bq$ be the $\delta_1$-compression of $\bv$.  For any $(j,h)\in\mathbb{Z}_{\delta_1}\rtimes\mathbb{Z}_{\delta_1}^\times$, there exists $(k,g)\in \Zl\rtimes\Zlt $ such that $c_k(d_g(\bv))$ has $\delta_1$-compression $c_j(d_h(\bq))$.
\end{thm}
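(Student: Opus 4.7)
The plan is to use the Chinese Remainder Theorem, which is the natural tool given the hypothesis $\gcd(\delta_1,\delta_2)=1$. This yields a ring isomorphism $\mathbb{Z}_\ell \cong \mathbb{Z}_{\delta_1}\times \mathbb{Z}_{\delta_2}$, under which the $\delta_1$-compression is exactly ``sum along the second coordinate,'' decimation corresponds to componentwise multiplication, and circulant shift corresponds to componentwise addition. The whole statement should reduce to one componentwise calculation.

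First I would use CRT to lift the given $(j,h)$ to $(k,g)\in \Zl\rtimes \Zlt$: let $k$ be the unique element of $\Zl$ with $k\equiv j\pmod{\delta_1}$ and $k\equiv 0\pmod{\delta_2}$, and let $g$ be the unique element with $g\equiv h\pmod{\delta_1}$ and $g\equiv 1\pmod{\delta_2}$. Since $h$ is a unit mod $\delta_1$ and $1$ is a unit mod $\delta_2$, the lifted $g$ is a unit mod $\ell$, so $(k,g)\in \Zl\rtimes \Zlt$ as required.

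Next I would compute the $\delta_1$-compression $\tilde{\bq}$ of $c_k(d_g(\bv))$ directly from the definitions. For $m\in\{0,\dots,\delta_1-1\}$ one gets
$$\tilde{q}_m=\sum_{s=0}^{\delta_2-1} v_{g^{-1}(m+s\delta_1-k)\bmod \ell}.$$
The key observation is that as $s$ ranges over $\{0,\dots,\delta_2-1\}$, the argument $g^{-1}(m+s\delta_1-k)$ runs over the entire CRT-fiber above the residue $h^{-1}(m-j)\pmod{\delta_1}$. Indeed, its first coordinate is $g^{-1}(m-k)\equiv h^{-1}(m-j)\pmod{\delta_1}$ independently of $s$, while its second coordinate is $g^{-1}(m+s\delta_1-k)\equiv m+s\delta_1-k\pmod{\delta_2}$ (since $g\equiv 1\pmod{\delta_2}$), and $s\mapsto s\delta_1 \pmod{\delta_2}$ is a bijection because $\gcd(\delta_1,\delta_2)=1$. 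Hence the sum collects exactly those $v_n$ with $n\equiv h^{-1}(m-j)\pmod{\delta_1}$, which by definition of $\bq$ equals $q_{h^{-1}(m-j)}=(d_h(\bq))_{m-j}=(c_j(d_h(\bq)))_m$.

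The actual content is light — it is really just bookkeeping once the CRT identification is in place — so the ``main obstacle'' is mostly notational: keeping the roles of $\mathbb{Z}_\ell$ and $\mathbb{Z}_{\delta_1}$ indices clear while reducing, and being careful that $g^{-1}$ can mean the inverse either in $\Zlt$ or its image in $\mathbb{Z}_{\delta_1}^\times$. The only place where a hypothesis is used nontrivially is the coprimality $\gcd(\delta_1,\delta_2)=1$, which is needed both to lift $h$ to a unit mod $\ell$ and to guarantee that $s\delta_1$ traverses $\mathbb{Z}_{\delta_2}$ as $s$ does.
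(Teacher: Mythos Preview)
Your proof is correct. The paper itself does not include a proof of this theorem; it states the result and immediately proceeds to discuss its consequences, treating it as essentially self-evident. Your CRT-based lifting --- choosing $g\in\Zlt$ with $g\equiv h\pmod{\delta_1}$, $g\equiv 1\pmod{\delta_2}$, and $k\in\Zl$ with $k\equiv j\pmod{\delta_1}$, $k\equiv 0\pmod{\delta_2}$ --- is exactly the kind of argument the paper has in mind: the isomorphisms $\psi$ and $\chi$ of equations~\eqref{eqn:psiIso} and~\eqref{eqn:chiIso} are introduced in the next section precisely to formalise this componentwise picture, and Corollary~\ref{crly:DecompDeci} (which the paper also states without proof, saying it ``follows directly from the definition'') is the special case $j=0$, $h\in H$ of your computation. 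Your fiber argument showing that $s\mapsto g^{-1}(m+s\delta_1-k)$ sweeps out the residue class $h^{-1}(m-j)\pmod{\delta_1}$ is clean and uses $\gcd(\delta_1,\delta_2)=1$ in both of the places it is genuinely needed.
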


Theorem~\ref{thm:DecClassRep} guarantees that if $\bv$ $\delta_1$-compresses to $\bq$, each member in the decimation class of $\bv$ will $\delta_1$-compress to some member in the decimation class of $\bq$.  This implies that a search for combinatorial objects that are equivalent up to decimation classes may be restricted to decimation class representatives of the compressions to reduce the number of duplicate vectors from each decimation class.  The number of duplicates corresponding to decimations relies on the size of each class' {\em multiplier} subgroup, where $g\in\Zlt $ is called a multiplier of $\bv$ if and only if there exists some $j\in\Zl$ such that $c_j(d_g(\bv))=\bv$.

%The vectors constituting LP have density relatively prime to length, guaranteeing each non-trivial cyclic shift yields a unique vector.  It is well known this behavior does not hold for decimations.  

\begin{crly}\label{crly:DecompDeci}
Let $\bv$ be a vector of length $\ell=\delta_1\delta_2$ such that $\gcd(\delta_1,\delta_2)=1$, and $\bq$ be its $\delta_1$-compression.  Then  $c_j(d_k(\bv))$ $\delta_1$-compresses to $\bq$ for some $j\in\Zl$ if and only if $k \ \text{mod} \ {\delta_1}$ is a multiplier of $\bq$.
\end{crly}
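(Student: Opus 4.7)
The plan is to reduce the question of what $c_j(d_k(\bv))$ compresses to into a statement purely about $\bq$, by showing that $\delta_1$-compression commutes, in a clean way, with both circulant shifts and decimations. Specifically, I will first prove the following key identity: if $\bv$ $\delta_1$-compresses to $\bq$, then for any $j \in \Zl$ and $k \in \Zlt$, the vector $c_j(d_k(\bv))$ $\delta_1$-compresses to $c_{j \bmod \delta_1}\bigl(d_{k \bmod \delta_1}(\bq)\bigr)$. The shift part is immediate from the definition of compression. For the decimation part, the essential observation is that $k^{-1}$ is a unit in $\Zl$, so multiplication by $k^{-1}$ permutes the subgroup $\delta_1 \Zl = \{0, \delta_1, \ldots, (\delta_2-1)\delta_1\}$ and hence permutes its cosets.

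Concretely, the set $\{k^{-1}(g + i\delta_1) \bmod \ell : i = 0, \ldots, \delta_2-1\}$ consists of $\delta_2$ distinct residues (injectivity of multiplication by a unit), all congruent to $k^{-1}g$ modulo $\delta_1$; it is therefore exactly the coset $(k^{-1}g) + \delta_1 \Zl$. Reindexing the compression sum accordingly yields
\[
    \sum_{i=0}^{\delta_2 - 1} (d_k(\bv))_{g + i\delta_1}
    \;=\; \sum_{i=0}^{\delta_2 - 1} v_{k^{-1}g + i\delta_1}
    \;=\; q_{k^{-1}g \bmod \delta_1}
    \;=\; \bigl(d_{k \bmod \delta_1}(\bq)\bigr)_g,
\]
where $k^{-1} \bmod \delta_1$ is the inverse of $k \bmod \delta_1$ in $\mathbb{Z}_{\delta_1}^{\times}$ (which exists because $\gcd(k,\ell)=1$ forces $\gcd(k,\delta_1)=1$).

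With the commutation identity in hand, the corollary follows. For the forward direction, if $c_j(d_k(\bv))$ compresses to $\bq$, then $c_{j \bmod \delta_1}(d_{k \bmod \delta_1}(\bq)) = \bq$, which by definition means $k \bmod \delta_1$ is a multiplier of $\bq$. For the converse, if $k \bmod \delta_1$ is a multiplier then there exists $j_0 \in \mathbb{Z}_{\delta_1}$ with $c_{j_0}(d_{k \bmod \delta_1}(\bq)) = \bq$; choosing $j = j_0 \in \Zl$ then forces $c_j(d_k(\bv))$ to compress to $\bq$ by the identity.

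The main obstacle is the commutation identity for decimation. Once one sees that multiplication by the unit $k^{-1}$ shuffles the cosets of $\delta_1 \Zl$ bijectively, the indices appearing in the compression sum are just reshuffled within a single coset, and the rest is bookkeeping. Note that this step does not actually require $\gcd(\delta_1,\delta_2)=1$, which is only needed upstream to state Theorem~\ref{thm:DecClassRep} in its symmetric form; the corollary, interpreted as above, is purely a statement about how decimations descend to the compression.
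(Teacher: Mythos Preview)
Your proof is correct and is exactly what the paper has in mind: the paper's entire argument is the one-line remark that the corollary ``follows directly from the definition of $\delta_1$-compression,'' and your commutation identity $c_j(d_k(\bv)) \mapsto c_{j \bmod \delta_1}(d_{k \bmod \delta_1}(\bq))$ is precisely the unpacking of that definition. Your observation that the coprimality hypothesis $\gcd(\delta_1,\delta_2)=1$ is not actually needed here is also correct and worth keeping.
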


Corollary~\ref{crly:DecompDeci} follows directly from the definition of $\delta_1$-compression.  We denote the set of vectors in the decimation class containing $\bv$ with $\delta_1$- compression $\bq$ as $D_{\bv,\bq}$.  The number of vectors within the same decimation class having the same $\delta_1$-compression is derived in Corollary \ref{crly:DecClassCount}.  

\begin{crly}\label{crly:DecClassCount}
Let $\bv$ be a vector with density relatively prime to length $\ell=\delta_1\delta_2$ where $\gcd(\delta_1,\delta_2)=1$,  and $\bq$ be its $\delta_1$-compression.  Let $G_{\bv}\leq\Zlt $ and $H\leq\mathbb{Z}_{\delta_1}^\times$ be the group of multipliers of $\bv$ and $\bq$ respectively.  Then the number of vectors within the decimation class of $\bv$ that $\delta_1$-compress to $\bq$ is 
\begin{equation*}
    |D_{\bv,\bq}|=\delta_2\frac{\phi(\delta_2)|H|}{|G_{\bv}|}.
\end{equation*}
\end{crly}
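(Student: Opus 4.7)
The plan is to prove the identity by a double-counting argument on the pairs $(j,k) \in \Zl \times \Zlt$ whose action produces members of $D_{\bv,\bq}$. First I would make explicit the action of $G = \Zl \rtimes \Zlt$ on vectors of length $\ell$ via $(j,k) \cdot \bv := c_j(d_k(\bv))$. A short computation using the identity $d_k(c_{j'}(\bv)) = c_{kj'}(d_k(\bv))$ verifies that this is a left action with multiplication $(j,k)(j',k') = (j+kj',\,kk')$, and its orbit on $\bv$ is precisely the decimation class of $\bv$. The stabilizer of $\bv$ consists of $(j,k)$ with $c_j(d_k(\bv)) = \bv$; by definition $k$ must lie in $G_{\bv}$, and since the density $\kappa$ of $\bv$ is coprime to $\ell$, the $\ell$ cyclic shifts of $d_k(\bv)$ are distinct, so each $k \in G_{\bv}$ forces a unique admissible $j$. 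Hence $|\mathrm{Stab}(\bv)| = |G_{\bv}|$.

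Applying orbit-stabilizer fiberwise, the total count $N$ of pairs $(j,k) \in \Zl \times \Zlt$ for which $c_j(d_k(\bv))$ $\delta_1$-compresses to $\bq$ satisfies $N = |G_{\bv}| \cdot |D_{\bv,\bq}|$, so it suffices to show that $N = \delta_2 \phi(\delta_2) |H|$. To compute $N$, I would fix $k$ and use the fact that compression commutes with circulant shifts in the sense that the $\delta_1$-compression of $c_j(d_k(\bv))$ equals $c_{j \bmod \delta_1}$ applied to the compression $\tilde{\bq}$ of $d_k(\bv)$ (a direct re-indexing). By Corollary~\ref{crly:DecompDeci}, some shift of $\tilde{\bq}$ equals $\bq$ if and only if $k \bmod \delta_1 \in H$. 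When this occurs, $\bq$ inherits density $\kappa$ by summing, which is coprime to $\delta_1$, so the shifts of $\bq$ are pairwise distinct and $j \bmod \delta_1$ is uniquely determined; this contributes exactly $\delta_2$ admissible values of $j \in \Zl$ (one in each residue class modulo $\delta_1$). Finally, since $\gcd(\delta_1,\delta_2) = 1$, the CRT isomorphism $\Zlt \cong \mathbb{Z}_{\delta_1}^\times \times \mathbb{Z}_{\delta_2}^\times$ shows that the number of $k \in \Zlt$ with $k \bmod \delta_1 \in H$ equals $|H| \phi(\delta_2)$. Multiplying yields $N = \delta_2 \phi(\delta_2) |H|$, and dividing by $|G_{\bv}|$ gives the claimed formula.

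The step I expect to be the main obstacle is the careful verification that $|\mathrm{Stab}(\bv)| = |G_{\bv}|$ rather than some larger quantity indexed by extra shifts; this hinges on the density-coprime-to-length hypothesis to rule out multiple $j$ per multiplier $k$. The commutation between compression and circulant shift, and the CRT factorization, are routine once the action setup is in place, and the density inheritance for $\bq$ is immediate by summing $\bv$'s entries.
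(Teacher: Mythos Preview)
Your argument is correct and follows essentially the same route as the paper: both proofs use the CRT isomorphism $\Zlt \cong \mathbb{Z}_{\delta_1}^\times \times \mathbb{Z}_{\delta_2}^\times$ to count $\phi(\delta_2)|H|$ decimations in $\Zlt$ whose residue modulo $\delta_1$ lies in $H$, observe via the density hypothesis that each contributing necklace yields exactly $\delta_2$ shifts, and then divide by $|G_{\bv}|$ to pass from group elements to orbit elements. Your orbit--stabilizer packaging makes the final division step more transparent than the paper's terser formulation, but the underlying counting is identical.
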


\begin{proof}
Since $\gcd(\delta_1,\kappa)=1$ there exists $\delta_2$ circulant shifts of $\bv$ that $\delta_1$-compress to $\bq$.  It remains to show the duplicity of necklaces.  It is well known there exists an isomorphism mapping $\Zlt \mapsto\mathbb{Z}_{\delta_1}^{\times}\times\mathbb{Z}_{\delta_2}^{\times}$.  Since $g\in G_{\bv}$ is a multiplier of $\bv$, it is also a multiplier of $\bq$, as is $g+k\delta_1$ for each $k\in\mathbb{Z}_{\delta_2}^\times$.  It follows only $\phi(\delta_2)=|\mathbb{Z}_{\delta_2}^{\times}|$ such $k$ exist,  there exists $\phi(\delta_2)|H|$ decimations in $\Zlt $ which are multipliers of $\bq$.  Of these, $|G_{\bv}|$ are also multipliers of $\bv$.  Therefore, the number of vectors within the decimation class of $\bv$  that $\delta_1$-compression to $\bq$ is $|D_{\bv,\bq}|=\delta_2\frac{\phi(\delta_2)|H|}{|G_{\bv}|}$.
\end{proof}

Let $Q$ be the set of vectors that $\delta_1$-compress to $\bq$.  A simple counting argument proves the size of this set is
\begin{equation*}
    |Q|=\prod_{j=0}^{\delta_1-1}{\delta_2 \choose q_j}.
\end{equation*}
Let $\boldsymbol{d}_j$ denote the lexicographically smallest vector within its respective decimation class with $\delta_1$-compression $\bq$.  By Corollary~\ref{crly:DecClassCount},
\begin{equation*}
    |Q|=\prod_{j=0}^{\delta_1-1}{\delta_2 \choose q_j}=\sum_{\boldsymbol{d}_j\in Q}|D_{\boldsymbol{d}_j,\bq}|=|H|\sum_{\boldsymbol{d}_j\in Q}\frac{\delta_2\phi(\delta_2)}{|G_{\boldsymbol{d}_j}|}=\sum_{\boldsymbol{d}_j\in Q}\frac{|D_{\boldsymbol{d}_j}|}{|D_{\bq}|}.
\end{equation*}

It follows that generating vectors through decompression of integer decimation class representatives causes significant duplicity for the representatives of the corresponding binary decimation classes.  This duplicity can be reduced by implementing simultaneous decompression on relatively prime factors of $\ell$.

\section{Simultaneous decompression}
\label{sec:SComp}

%Simultaneous Decompression with two constraint vectors is a subset of BCSSP wherein $\gcd(\delta_1,\delta_2)=1$.  

\begin{dfn}
Let $\ell=\prod_{j=1}^{n}\delta_j$ such that $\gcd(\delta_i,\delta_j)=1$ for $i\neq j$.  Let $\bq^j$ be a vector of length $\delta_j$.  Vector $\bv$ of length $\ell$ is defined to be a {\it simultaneous decompression} of $\{\bq^1, \bq^2,\dots,\bq^n\}$ if and only if $q^j_g=\sum_{k=0}^{(\ell/\delta_j)-1}v_{g+k\delta_j}$, for each $j\in\{1,2,\dots,n\}$ and $g\in\{1,2,\dots,\delta_j\}$.
\end{dfn}

For example, let $\bq^1=[4, 2, 1, 4, 3, 3, 1]^\top$ and $\bq^2=[5, 2, 3, 4, 4]^\top$.   One possible simultaneous decompression of $\bq^1$ and $\bq^2$ is
\begin{equation}\label{eqn:ExampleBV}
    {\small \bv^\top=
    \begin{matrix}
    [1, 1, 0, 1, 1, 1, 0, 1, 0, 1, 1, 0, 1, 0, 1, 1, 0, \\
    0, 1, 0, 1, 0, 0, 0, 1, 0, 0, 0, 1, 0, 0, 1, 1, 1, 0]
    \end{matrix}}
\end{equation} 
as $\bv$ $\delta_1$-compresses to $\bq^1$ and $\delta_2$-compresses to $\bq^2$.  In equation~\eqref{eqn:ExampleBV}, the top row shows indices $\{1,\dots,17\}$ and the bottom shows $\{18,\dots,35\}$.  Similarly, \begin{equation}
    {\small \bu^\top=
    \begin{matrix}
    [1, 1, 0, 1, 1, 1, 1, 1, 0, 0, 0, 0, 0, 1, 0, 1, 1, \\
    0, 0, 0, 1, 0, 1, 1, 1, 1, 0, 0, 1, 1, 1, 0, 0, 0, 0]
    \end{matrix}}
\end{equation} 
simultaneously compresses to $\bp^1=[3, 4, 3, 2, 2, 1, 3]^\top$ and $\bp^2=[6, 3, 2, 4, 3]^\top$.

Each theorem for single decompression trivially hold for simultaneous decompression.  The linearly independent set of simultaneous decompression constraints are
\begin{equation}\label{eqn:ConstraintMatrix}
\begin{bmatrix} 
\bOne^{\top}_{\ell} \\ 
\bOne^{\top}_{\frac{\ell}{\delta_1}}\otimes \begin{bmatrix}\bZer_{\delta_1-1} & \boldsymbol{I}_{\delta_1-1} \end{bmatrix} \\ 
\bOne^{\top}_{\frac{\ell}{\delta_2}}\otimes \begin{bmatrix}\bZer_{\delta_2-1} & \boldsymbol{I}_{\delta_2-1} \end{bmatrix}\\ \vdots \\ 
\bOne^{\top}_{\frac{\ell}{\delta_n}}\otimes \begin{bmatrix}\bZer_{\delta_n-1} & \boldsymbol{I}_{\delta_n-1} \end{bmatrix}\end{bmatrix}
\bv
=
\begin{bmatrix} \kappa \\ \bq^{1,\{-\}} \\ \bq^{2,\{-\}} \\ \vdots \\ \bq^{n,\{-\}}\end{bmatrix},
\end{equation}
where $\kappa=\sum_{i=0}^{\ell-1}v_i$ is the density constraint, $\bZer_{y}$ denotes a zero vector of length $y$, $\bq^{i,\{-\}}$ denotes all the $\delta_i$-compression excluding the $0$th index, and $\otimes$ is the {\it Kronecker} product.  %Notice each row has length ${(1+\delta_i-1)(\ell/\delta_i)=\ell}$.  
It follows from equation~\eqref{eqn:ConstraintMatrix} the number of linearly independent constraints defining simultaneous decompression is ${1+\sum_{i=1}^{n}(\delta_i-1)}$.  Similarly, if $\bq^i$ and $\bp^i$ are complementary sequences with PSD constant $\gamma$ for each ${i\in\{1,2,\dots,n\}}$, each vector generated through simultaneous decompression of all $\bq^i$ and $\bp^i$ respectively will satisfy $\sum_{j=1}^{n}\left(\delta_j-1\right)$ of the constraints defined by equation~\eqref{eqn:CompPSD}.  This is the case for pairs $\bq^1,\bp^1$ and $\bq^2,\bp^2$ with PSD constant $\gamma=18=(35+1)/2$. 

Notice from equation \eqref{eqn:ConstraintMatrix} the number of constraints set by simultaneous decompression increases as $\delta_i$ become imbalanced, with the most extreme example of this being $\delta_1=\ell$ and $\delta_2=1$.  The trade-off of this approach is the efficiency of generating complementary compressed sequences versus the efficiency of decompression.  Imbalanced factors increase complexity of generating complementary compressed sequences while relatively balanced factors, such as $\delta_1=7,\delta_2=5$, have greater complexity for generating decompressions.

The development of theory for simultaneous decompression will proceed similarly to single decompression.  The effects of circulant shifts and decimations will be addressed when considering simultaneous compressions, followed by an equation to determine the duplicity of decimation classes through simultaneous decompression.

Since $\ell=\prod_{j=1}^{n}\delta_j$ for $\delta_j$ pairwise relatively prime, the mapping 
\begin{equation}\label{eqn:psiIso}
    \psi:\Zl\mapsto \mathbb{Z}_{\delta_1}\times \mathbb{Z}_{\delta_2}\times\dots\times\mathbb{Z}_{\delta_n}=\bigtimes_{j=1}^{n}\mathbb{Z}_{\delta_j}
\end{equation}
with $\psi(1)=(1,1,\dots,1)$ constitutes a well known isomorphism.  Let $\bv$ be a vector of length $\ell$ with $\delta_i$-compression $\bq^i$ for $i\in\{1,2,\dots,n\}$.  Use of an isomorphism for simultaneous compression ensures each circulant shift on $\bv$ maps to a unique set of circulant shifts on $\{\bq^1,\bq^2,\dots,\bq^n\}$.  

Similarly let $\chi(j)=(j \ \text{mod} \ \delta_1, j \ \text{mod} \ \delta_2, \dots, j \ \text{mod} \ \delta_n)$ be the well known isomorphism,
\begin{equation}\label{eqn:chiIso}
    \chi:\Zlt\mapsto \mathbb{Z}^{\times}_{\delta_1}\times \mathbb{Z}^{\times}_{\delta_2}\times\dots\times\mathbb{Z}^{\times}_{\delta_n}=\bigtimes_{j=1}^{n}\mathbb{Z}^{\times}_{\delta_j}.
\end{equation}
It follows that decimation $d_j(\bv)$ yields a corresponding decimation on each $\delta_i$-compression, $d_j(\bq^i)$ for $i\in\{1,2,\dots,n\}$.  The following theorem determines the number of vectors within the decimation class of $\bv$ that simultaneously compress to $\{\bq^1,\bq^2,\dots,\bq^n\}$.

\begin{thm}\label{thm:MultiCount}
Let $\bv$ be a vector of length $\ell=\prod_{j=1}^{n}\delta_j$ such that ${\gcd(\delta_i,\delta_j)=1}$ for $i\neq j$, and $\bq^i$ its $\delta_i$-compression.  Let $G_{\bv}\leq\Zlt $ be the multiplier group of $\bv$, and $H_{i}\leq\mathbb{Z}_{\delta_i}^{\times}$ be the multiplier group of $\bq^i$.  The number of vectors within the decimation class of $\bv$ that simultaneously compress to $\{\bq^1,\bq^2,\dots,\bq^n\}$ is 
\begin{equation*}\label{eqn:multiCount}
    |D_{\bv,\{\bq^1,\bq^2,\dots,\bq^n\}}|=\frac{\prod_{i=1}^{n}|H_i|}{|G_{\bv}|}.
\end{equation*}
\end{thm}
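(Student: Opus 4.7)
The plan is to count the admissible pairs $(j,g)\in\Zl\rtimes\Zlt$ for which $c_j(d_g(\bv))$ simultaneously compresses to $\{\bq^1,\dots,\bq^n\}$, and then divide by the size of the stabilizer of $\bv$ under the shift-decimation action (which controls how many pairs produce any given vector in the decimation class).

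First I would apply the computation underlying Theorem~\ref{thm:DecClassRep} once per factor: the $\delta_i$-compression of $c_j(d_g(\bv))$ equals $c_{j\bmod\delta_i}(d_{g\bmod\delta_i}(\bq^i))$. Requiring this to equal $\bq^i$ is exactly the statement that $(j\bmod\delta_i,\,g\bmod\delta_i)$ fixes $\bq^i$ in $\mathbb{Z}_{\delta_i}\rtimes\mathbb{Z}_{\delta_i}^\times$. By definition of the multiplier group, this forces $g\bmod\delta_i\in H_i$; and since $\gcd(\delta_i,\kappa)=1$ (implied by the standing LP hypothesis $\gcd(\ell,\kappa)=1$), each $\bq^i$ has trivial cyclic stabilizer, so the shift coordinate $j\bmod\delta_i$ is uniquely determined once $g\bmod\delta_i$ has been chosen. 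Transporting through the Chinese-remainder isomorphisms $\psi$ and $\chi$ of equations~\eqref{eqn:psiIso} and~\eqref{eqn:chiIso}, the set of admissible $g\in\Zlt$ is $\chi^{-1}\!\left(\prod_{i=1}^n H_i\right)$, of cardinality $\prod_{i=1}^n|H_i|$, and for each admissible $g$ the prescription pins down a unique $j\in\Zl$ via $\psi$. Hence there are exactly $\prod_{i=1}^n|H_i|$ admissible pairs $(j,g)$.

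Finally I would quotient by the over-counting. Using $\gcd(\ell,\kappa)=1$ again, $\bv$ itself has trivial cyclic stabilizer, so the pairs $(j,g)$ satisfying $c_j(d_g(\bv))=\bv$ form a subgroup of $\Zl\rtimes\Zlt$ of size exactly $|G_{\bv}|$ (one shift per multiplier). Every element of the decimation class of $\bv$ is therefore represented by exactly $|G_{\bv}|$ pairs, and dividing yields $|D_{\bv,\{\bq^1,\dots,\bq^n\}}|=\prod_{i=1}^n|H_i|/|G_{\bv}|$. As a sanity check this specializes to Corollary~\ref{crly:DecClassCount} when $n=2$: the extra factor of $\delta_2\phi(\delta_2)$ there is absorbed into $|H_2|$ after noting that the trivial second-factor compression is stabilized by all of $\mathbb{Z}_{\delta_2}\rtimes\mathbb{Z}_{\delta_2}^\times$. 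The main obstacle I anticipate is the noncommutative bookkeeping in the shift-decimation semidirect product (keeping the identity $d_g\circ c_j=c_{gj}\circ d_g$ straight) together with careful verification that the cyclic stabilizers of $\bv$ and of each $\bq^i$ really are trivial; both points reduce to the arithmetic coprimality of the density with $\ell$ and hence with each $\delta_i$.
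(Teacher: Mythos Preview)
Your proposal is correct and follows essentially the same route as the paper's proof: use the Chinese-remainder isomorphisms $\psi$ and $\chi$ to see that $g\in\Zlt$ is admissible precisely when $g\bmod\delta_i\in H_i$ for every $i$, observe that the accompanying shift is then uniquely pinned down (trivial cyclic stabilizers from $\gcd(\delta_i,\kappa)=1$), and quotient by $|G_{\bv}|$ to pass from pairs $(j,g)$ to distinct vectors. Your orbit--stabilizer packaging is a bit more streamlined than the paper's case split on whether or not $g\in G_{\bv}$, but the substance is identical.
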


\begin{proof}
Since no circulant shift of a vector with density relatively prime to $\ell$ may result in the same set of simultaneous compressions, it suffices to determine the duplicity of decimations.  By equation \eqref{eqn:chiIso}, for each $(j_1,j_2,\dots,j_n)$ there exists exactly one $j\in\Zlt$ such that $\chi(j)=(j_1,j_2,\dots,j_n)$ by equation~\eqref{eqn:chiIso}.   If there exists $i\in\{1,\dots,n\}$ such that $g \ (\text{mod} \ \delta_i)$ is not a multiplier of  $\bq^i$, then no $j\in\mathbb{Z}_{\delta_i}$ exists such that $d_g(\bq^i)=c_j(\bq^i)$.  Hence, $d_g(\bq^i)$ cannot decompress to $\bv$.  
%Dont need to list g\neq 1 because then g would be a multiplier

Let $(h_1,h_2,\dots,h_n)\in H_1\times H_2\times\dots\times H_n$, and assume there exists some $g\in\Zlt $ such that $\chi(g)=(h_1,h_2,\dots,h_n)$, and $g$ is a multiplier of each $\bq^i$.  If $g$ is a multiplier of $\bv$, by definition $d_g(\bv)=c_j(\bv)$ for some $j\in\Zl$.  Then $d_g(\bv)$ simultaneously compresses to $\{\bq^1,\bq^2,\dots,\bq^n\}$ if and only if $j=0$.  

Now assume $g$ is not a multiplier of $\bv$, implying $c_j(d_g(\bv))\neq \bv$ for any $j\in\Zl$.  Since $\psi$ is an isomorphism (equation~\eqref{eqn:psiIso}) and $g$ is a multiplier of each $\bq^i$, there exists exactly one circulant shift $j\in\Zl$ such that ${c_j(d_g(\bq^i))=\bq^i}$ for each ${i\in\{1,2,\dots,n\}}$ and $c_j(d_g(\bv))$ compresses to $\{\bq^1,\bq^2,\dots,\bq^n\}$.  Thus, multiple vectors within $D_{\bv,\{\bq^1,\bq^2,\dots,\bq^n\}}$ result from simultaneous decompression if and only if $\chi(g)\in \left(H_1\times H_2\times\dots\times H_n\right)/\chi(G_v)$, implying $$|D_{\bv,\{\bq^1,\bq^2,\dots,\bq^n\}}|=\frac{|\bigtimes_{i=1}^{n}H_i|}{|G_{\bv}|}=\frac{\prod_{i=1}^{n}|H_i|}{|G_{\bv}|}.$$
\end{proof}

It follows from Theorem~\ref{thm:MultiCount} and Corollary~\ref{crly:DecClassCount} that $|D_{\bv,\{\bq^1,\bq^2,\dots,\bq^n\}}|$ divides $|D_{\bv,\bq^i}|\delta_i/\ell$ for any $i\in\{1,2,\dots,n\}$.  By Theorem~\ref{thm:DecClassRep}, the search space can be restricted to decimation class representatives for each $\delta_i$-compression, $\bq^i$.

The case $n=2$ is of particular interest as next five unsolved cases for LP existence are of length $\{77, 85, 87, 115, 117\}$, each the product of exactly two prime powers \cite{balonin2020three}.   Let 
\begin{equation*}\label{eqn:thetaIso}
    \theta:\{0,1\}^{\ell}\mapsto \{0,1\}^{\delta_1\times\delta_2}
\end{equation*}
be the mapping of binary vectors of length $\ell$ to arrays of size $\delta_1\times\delta_2$, with indices mapped as defined by equation \eqref{eqn:psiIso}.  When $n=2$, $\bOne_{\delta_1}^\top \bA=\bq^{2\top}$ and $\bA\bOne_{\delta_2} =\bq^{1}$.  For example, let $\bv$ be the vector given in equation~\eqref{eqn:ExampleBV}.  Then
\begin{equation*}\label{eqn:ExampleBVArray}
    \theta(\bv)=\bA=\begin{bmatrix}
1&	0&	1&	1&	1\\
1&	1&	0&	0&	0\\
0&	0&	0&	0&	1\\
1&	1&	0&	1&	1\\
0&	0&	1&	1&	1\\
1&	0&	1&	1&	0\\
1&	0&	0&	0&	0
    \end{bmatrix}.
\end{equation*}
As expected, $\bOne_{7}^\top\bA =\bq^{2\top}$ and $\bA\bOne_5=\bq^{1}$.  The problem of generating all such $\bA$ for fixed $\bq^1$ and $\bq^2$ is also known as enumerating all binary matrices with fixed marginals (BMFM)\cite{brualdi1980matrices}.  We focus on $n=2$ exclusively for the remainder of the paper.

\begin{thm}\label{thm:DFTConversion}
Let $\Omega_{\delta_1},\ \Omega_{\delta_2},$ and $\Omega_{\ell}$ be the DFT matrices of sizes $\delta_1$, $\delta_2$, and $\ell=\delta_1\delta_2$ respectively where $\gcd(\delta_1,\delta_2)=1$.  There exists unique ${z=\chi^{-1}\left(\delta_2^{-1},\delta_1^{-1}\right)}$ for $(\delta_2^{-1},\delta_1^{-1})\in\mathbb{Z}_{\delta_1}^{\times}\times\mathbb{Z}_{\delta_2}^{\times}$ such that
\begin{equation}
    \theta(\Omega_{\ell}(d_z(\bv)))=\theta(d_{z^{-1}}(\Omega_{\ell}\bv))=\Omega_{\delta_1}\theta(\bv) \Omega_{\delta_2}.
\end{equation}
\end{thm}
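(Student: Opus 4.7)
The plan is to prove both equalities by direct entry-by-entry computation, using the CRT isomorphism $\psi$ to translate between $\Zl$ and $\mathbb{Z}_{\delta_1}\times\mathbb{Z}_{\delta_2}$, together with the identifications $\omega_\ell^{\delta_2}=\omega_{\delta_1}$ and $\omega_\ell^{\delta_1}=\omega_{\delta_2}$ already used inside the proof of Theorem~\ref{thm:CompPresMu}.

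First I would establish the left-hand equality by proving the stronger vector identity $\Omega_\ell d_z(\bv)=d_{z^{-1}}(\Omega_\ell\bv)$. Starting from $(\Omega_\ell d_z(\bv))_g=\sum_{j}v_{z^{-1}j}\,\omega_\ell^{gj}$ and re-indexing $j\mapsto zj$ (a bijection on $\Zl$ because $z\in\Zlt$) turns this into $\sum_j v_j\,\omega_\ell^{gzj}=\mu_{gz}$, which is exactly $(d_{z^{-1}}\bmu)_g$. Applying $\theta$ to both sides gives the first equality, with no $\psi$- or $\chi$-bookkeeping needed yet.

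Next I would compute the $(s,t)$ entry of $\Omega_{\delta_1}\theta(\bv)\Omega_{\delta_2}$. Expanding the two matrix products and using $\theta(\bv)_{j_1,j_2}=v_{\psi^{-1}(j_1,j_2)}$, the bijectivity of $\psi$ lets me re-parameterize the double sum over $(j_1,j_2)$ by a single index $j\in\Zl$. The $\delta_1$-periodicity $\omega_{\delta_1}^{j\bmod \delta_1}=\omega_{\delta_1}^{j}$ together with $\omega_{\delta_1}^{s}=\omega_\ell^{s\delta_2}$ and $\omega_{\delta_2}^{t}=\omega_\ell^{t\delta_1}$ collapse the expression to $\sum_j v_j\,\omega_\ell^{j(s\delta_2+t\delta_1)}=\mu_{s\delta_2+t\delta_1\bmod \ell}$, which is essentially Theorem~\ref{thm:CompPresMu} applied in both factors of the CRT decomposition at once.

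Finally I would identify the middle expression $\theta(d_{z^{-1}}\bmu)_{s,t}$ with this same $\mu$-value. By the definitions of $\theta$ and decimation, this entry is $\mu_{w\cdot\psi^{-1}(s,t)}$ for an element $w\in\Zlt$ that depends on $z$ in a fixed way. Requiring $w\cdot\psi^{-1}(s,t)\equiv s\delta_2+t\delta_1\pmod \ell$ for every $(s,t)$ then reduces, via $\psi$ applied to each CRT factor, to a pair of scalar congruences on $w$ modulo $\delta_1$ and modulo $\delta_2$. Those two congruences determine the residues of $w$ (and hence of $z$) uniquely, producing the claimed $z=\chi^{-1}(\delta_2^{-1},\delta_1^{-1})$; uniqueness is automatic from the injectivity of $\chi$.

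The main obstacle is purely bookkeeping: three intertwined isomorphisms are in play (the additive $\psi$ on index sets, the multiplicative $\chi$ on decimation units, and the root-of-unity compatibilities $\omega_\ell^{\delta_j}=\omega_{\delta_i}$), and one must carefully track which side of the identity $z$ or $z^{-1}$ lands on after passing through $\Omega_\ell$ so as not to introduce an accidental inversion. Once the conventions are aligned, the computation itself is a short change-of-indices exercise.
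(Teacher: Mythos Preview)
Your proposal is correct and follows essentially the same route as the paper's own proof. The paper cites the identity $\Omega_\ell d_z(\bv)=d_{z^{-1}}(\Omega_\ell\bv)$ as well known (which you re-derive by the one-line reindexing $j\mapsto zj$), then expands $(\Omega_{\delta_1}\theta(\bv)\Omega_{\delta_2})_{r,c}$ exactly as in your second step, combining $\omega_{\delta_1}^{rk}\omega_{\delta_2}^{cj}$ into a single $\omega_\ell$ exponent, and finally matches the result to $\mu$ at the index dictated by $\psi$; the only cosmetic difference is that the paper specializes $(r,c)=(x\delta_2^{-1},y\delta_1^{-1})$ and reads off the answer, whereas you keep $(s,t)$ generic and solve the resulting CRT congruences to \emph{derive} the value of $z$.
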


\begin{proof}
The uniqueness of $z$ is guaranteed by equation~\eqref{eqn:chiIso}.  It follows, $$\chi(z^{-1})=\left(\chi(z)\right)^{-1}=(\delta_2 \ \text{mod} \ \delta_1,\delta_1 \ \text{mod} \ \delta_2).$$  The relation $d_{z^{-1}}(\Omega_{\ell}\bv)=\Omega_{\ell}(d_{z}(\bv))$ for each $z\in\Zlt$ is well known \cite{Fletcher2001}.  To prove $\theta(d_{z^{-1}}(\Omega_{\ell}\bv))=\Omega_{\delta_1}\theta(\bv)\Omega_{\delta_2}$, let $\boldsymbol{M}=\Omega_{\delta_1}\theta(\bv) \Omega_{\delta_2}$, and $m_{r,c}$ be the element of $\boldsymbol{M}$ in row $r$, column $c$.

\begin{eqnarray*}
m_{r,c}=\sum_{j=0}^{\delta_2-1}e^{\left(cj\frac{2\pi i}{\delta_2}\right)} \sum_{k=0}^{\delta_1-1}e^{\left(rk\frac{2\pi i}{\delta_1}\right)} a_{k,j}\\
m_{r,c}=\sum_{j=0}^{\delta_2-1} \sum_{k=0}^{\delta_1-1}e^{\left(cj\frac{2\pi i}{\delta_2}\right)}e^{\left(rk\frac{2\pi i}{\delta_1}\right)} a_{k,j}\\
m_{r,c}=\sum_{j=0}^{\delta_2-1} \sum_{k=0}^{\delta_1-1}e^{\left(cj\frac{2\pi i}{\delta_2}\right)+\left(rk\frac{2\pi i}{\delta_1}\right)} a_{k,j}\\
m_{r,c}=\sum_{j=0}^{\delta_2-1} \sum_{k=0}^{\delta_1-1}e^{\left(cj\delta_1+rk\delta_2\right)\left(\frac{2\pi i}{\delta_1\delta_2}\right)} a_{k,j}\\
m_{r,c}=\sum_{j=0}^{\delta_2-1} \sum_{k=0}^{\delta_1-1}e^{\left(cj\delta_1+rk\delta_2\right)\left(\frac{2\pi i}{\ell}\right)} a_{k,j}
\end{eqnarray*}

It follows that 
$$m_{\left(x\delta_2^{-1},y\delta_1^{-1}\right)}=\sum_{j=0}^{\delta_2-1} \sum_{k=0}^{\delta_1-1}e^{\left(yj+xk\right)\left(\frac{2\pi i}{\ell}\right)} a_{k,j}=\theta(\mu_{\psi^{-1}(x,y)})$$
\end{proof}

Let pairs $(\bq^1,\bp^1)$ and $(\bq^2,\bp^2)$ are complementary sequences of sizes $\delta_1$ and $\delta_2$ respectively with PSD constant $\gamma$ and density $\kappa$.  Let $\bv$ and $\bu$ be solutions to BMFM with constraints $(\bq^1,\bq^2)$ and $(\bp^1,\bp^2)$ respectively.  It follows from equation~\eqref{eqn:CompConstPSD} that $PSD(\bv,g)+PSD(\bu,g)=\gamma$ for all $g\in\Zl/\{0,\Zlt\}$, and $PSD(\bv,j)+PSD(\bu,j)=\gamma$ need only be verified for $j\in\Zlt$ to determine if $\bv$ and $\bu$ are complementary.  These indices correspond to submatrix $\boldsymbol{M}^{\{-\}}$ of $\boldsymbol{M}$, where the $0$th row and column are removed from $\boldsymbol{M}$.  That is, if $\boldsymbol{M}=\Omega_{\delta_1}\theta(\bv)\Omega_{\delta_2}$ and $\boldsymbol{N}=\Omega_{\delta_1}\theta(\bu)\Omega_{\delta_2}$, then
$$\boldsymbol{M}^{\{-\}}\odot\overline{\boldsymbol{M}^{\{-\}}}+\boldsymbol{N}^{\{-\}}\odot\overline{\boldsymbol{N}^{\{-\}}}=\frac{\delta_1\delta_2+1}{2}\boldsymbol{J}_{(\delta_1-1)\times (\delta_2-1)}$$
where $\boldsymbol{J}_{m\times n}$ is the $m\times n$ matrix of ones, $\odot$ is the element-wise Hadamard product, and $\overline{\boldsymbol{M}^{\{-\}}}$ represents the element-wise complex conjugate of $\boldsymbol{M}^{\{-\}}$. 

Theorem~\ref{thm:Dursuns} further reduces the number of necessary PSD constraints from equation~\eqref{eqn:CompConstPSD} to $\left|\{\delta:\delta\in\Zl, \ \delta\mid \ell\}\right|$.

%%%%%%%%%%%%%%%%%%%%%%%%%%%%%%%%%%%%%%%%%%%%%%%%%%%%%%%%%%%%%%%%%%%%%%%%%%%%%%%%%%%%%%%%%%%%%%%%%%%%%%%%%%%%%%%%%%%%%%%%
\begin{thm}\label{thm:Dursuns}
Let $\bv$ and $\bu$ be integer vectors indexed by $\Zl$ and $\delta_1\delta_2=\ell$.  If $\left(\PSD(\bv,\delta_2)+\PSD(\bu,\delta_2)\right)\in\mathbb{Q}$, then 
$$\left(\PSD(\bv,\delta_2)+\PSD(\bu,\delta_2)\right)=\left(\PSD(\bv,r\delta_2)+\PSD(\bu,r\delta_2)\right)$$
for each $r\in\mathbb{Z}_{\delta_1}^{\times}$.
\end{thm}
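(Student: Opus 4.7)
The plan is to recognize the map $r \mapsto \PSD(\bv, r\delta_2) + \PSD(\bu, r\delta_2)$ on $\mathbb{Z}_{\delta_1}^{\times}$ as the Galois orbit, inside the cyclotomic field $\mathbb{Q}(\omega_{\delta_1})$, of its value at $r = 1$, and then to deduce constancy of that orbit from the rationality hypothesis.

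First I would reduce to the cyclotomic setting. Using the identity $\omega_\ell^{\delta_2} = \omega_{\delta_1}$ established in the proof of Theorem~\ref{thm:CompPresMu},
$$\mu_{r\delta_2} \;=\; \sum_{j=0}^{\ell-1} v_j\, \omega_\ell^{jr\delta_2} \;=\; \sum_{j=0}^{\ell-1} v_j\, \omega_{\delta_1}^{jr} \;\in\; \mathbb{Z}[\omega_{\delta_1}],$$
and analogously for the DFT of $\bu$. Next, let $\sigma_r \in \mathrm{Gal}(\mathbb{Q}(\omega_{\delta_1})/\mathbb{Q})$ be the automorphism sending $\omega_{\delta_1} \mapsto \omega_{\delta_1}^r$, for $r \in \mathbb{Z}_{\delta_1}^{\times}$. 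Then $\sigma_r(\mu_{\delta_2}) = \mu_{r\delta_2}$ directly from the displayed formula. Complex conjugation restricted to $\mathbb{Q}(\omega_{\delta_1})$ coincides with $\sigma_{-1}$, and since $\mathrm{Gal}(\mathbb{Q}(\omega_{\delta_1})/\mathbb{Q}) \cong \mathbb{Z}_{\delta_1}^{\times}$ is abelian, $\sigma_r$ commutes with conjugation, giving $\sigma_r(\bar\mu_{\delta_2}) = \bar\mu_{r\delta_2}$. Multiplicativity then yields $\sigma_r(\PSD(\bv,\delta_2)) = \PSD(\bv,r\delta_2)$, and likewise for $\bu$, so $\sigma_r\bigl(\PSD(\bv,\delta_2) + \PSD(\bu,\delta_2)\bigr) = \PSD(\bv, r\delta_2) + \PSD(\bu, r\delta_2)$.

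Finally, the hypothesis $\PSD(\bv,\delta_2) + \PSD(\bu,\delta_2) \in \mathbb{Q}$ is precisely the statement that this element is fixed by every $\sigma_r$, so the displayed equation above collapses to the claimed identity for every $r \in \mathbb{Z}_{\delta_1}^{\times}$. The only delicate step is the commutativity of $\sigma_r$ with complex conjugation used to move the Galois action through the factor $\bar\mu$; this is immediate from the abelianness of the Galois group, and the remainder of the argument is a direct symbolic manipulation with no combinatorial content.
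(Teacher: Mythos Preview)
Your argument is correct and is essentially the same as the paper's: both identify $\mu_{r\delta_2}$ as an element of $\mathbb{Z}[\omega_{\delta_1}]$, recognize $\sigma_r(\mu_{\delta_2})=\mu_{r\delta_2}$, use the abelianness of $\mathrm{Gal}(\mathbb{Q}(\omega_{\delta_1})/\mathbb{Q})$ to commute $\sigma_r$ with complex conjugation $\sigma_{-1}$, and conclude from the rationality hypothesis. The only cosmetic difference is that the paper first passes to the $\delta_1$-compressions $\bp,\bq$ and works with $\PSD(\bp,1),\PSD(\bq,1)$, whereas you work directly with $\mu_{\delta_2}$; since $\mu_{\delta_2}=\nu_1$ by Theorem~\ref{thm:CompPresMu}, these are the same computation.
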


\begin{proof}
Let $\bp$ and $\bq$ be the $\delta_1$-modular compression of $\bv$ and $\bu$ respectively.  Then $\PSD(\bp,1)=\PSD(\bv,\delta_2)$ and $\PSD(\bq,1)=\PSD(\bu,\delta_2)$.  Let $\omega= e^{2\pi i/{\delta_1}}$. Then $\mathbb{Q}(\omega)$ is a field extension of $\mathbb{Q}$ of degree
$\phi(\delta_1)$, with 
${\rm Gal}(\mathbb{Q}(\omega)/\mathbb{Q}) \cong \mathbb{Z}_{\delta_1}^{\times}$.  Recall that ${\rm Gal}(\mathbb{Q}(\omega)/\mathbb{Q})$ is the group of 
all automorphisms of the field $\mathbb{Q}(\omega)$. Then for each $\sigma_r \in {\rm Gal}(\mathbb{Q}(\omega)/\mathbb{Q})$% \cong \mathbb{Z}_{\ell}^{\times}$  
 $$\sigma_r(\omega)=\omega^r$$ and $\sigma_r(z)=z$ for 
$z \in \mathbb{Q}$ and $r \in \mathbb{Z}_{\delta_1}^{\times}$.  By definition
$$
\PSD(\bp,1)=\sum_{i \in \mathbb{Z}_{{\delta_1}}}\omega^ip_i\overline{\sum_{i \in \mathbb{Z}_{{\delta_1}}}\omega^i p_i}.
$$
Then 
$$
\PSD(\bp,1)=\sum_{i \in \mathbb{Z}_{{\delta_1}}}\omega^ip_i\sum_{i \in \mathbb{Z}_{{\delta_1}}}\omega^{-i}p_i.
$$
Consequently, 
\small{\begin{equation}\label{eqn:sigma}
\PSD(\bp,1)=\sum_{i \in \mathbb{Z}_{{\delta_1}}}\omega^ip_i\sigma_{-1}\left(\sum_{i \in \mathbb{Z}_{{\delta_1}}}\omega^{i}p_i\right)
.
\end{equation}}
For $r \in \mathbb{Z}_{{\delta_1}}^{\times}$, applying $\sigma_r \in {\rm Gal}(\mathbb{Q}(\omega)/\mathbb{Q})$ to both sides of equation~(\ref{eqn:sigma}) yields
{\footnotesize
 \begin{equation}\label{eqn:sigmar}
\sigma_r\left(\PSD(\bp,1)\right)=\sigma_r\left(\sum_{i \in \mathbb{Z}_{{\delta_1}}}\omega^ip_i\right)\sigma_r\left(\sigma_{-1}\left(\sum_{i \in \mathbb{Z}_{{\delta_1}}}\omega^{i}p_i\right)\right).
\end{equation}}
Since ${\rm Gal}(\mathbb{Q}(\omega)/\mathbb{Q})$ is commutative equation~(\ref{eqn:sigmar})
 implies
  \begin{equation}\label{eqn:sigmar2}
\sigma_r\left(\PSD(\bp,1)\right)=\sigma_r\left(\sum_{i \in \mathbb{Z}_{{\delta_1}}}\omega^ip_i\right)\sigma_{-1}\left(\sigma_{r}\left(\sum_{i \in \mathbb{Z}_{{\delta_1}}}\omega^{i}p_i\right)\right).
\end{equation}
Notice 
\begin{equation*}
    \sigma_r\left(\PSD(\bp,1)\right)+\sigma_r\left(\PSD(\bq,1)\right)=\sigma_r\left(\PSD(\bp,1)+\PSD(\bq,1)\right),
\end{equation*}
and 
\begin{equation*}
    {\left(\PSD(\bp,1)+\PSD(\bq,1)\right)\in\mathbb{Q}}
\end{equation*}
implies $$\sigma_r\left(\PSD(\bp,1)+\PSD(\bq,1)\right)=\PSD(\bp,1)+\PSD(\bq,1).$$

Then by equation~\ref{eqn:sigmar2} we get
$$
\PSD(\bp,1)+\PSD(\bq,1)=\sum_{i \in \mathbb{Z}_{{\delta_1}}}\omega^{ri}p_i\overline{\sum_{i \in \mathbb{Z}_{{\delta_1}}}\omega^{ri}p_i}+\sum_{i \in \mathbb{Z}_{{\delta_1}}}\omega^{ri}q_i\overline{\sum_{i \in \mathbb{Z}_{{\delta_1}}}\omega^{ri}q_i}
$$
 for each $r \in \mathbb{Z}_{{\delta_1}}^{\times}$, and $$\PSD(\bv,r\delta_2)+\PSD(\bu,r\delta_2)=\PSD(\bv,\delta_2)+\PSD(\bu,\delta_2).$$
\end{proof}

Corollary~\ref{crly:Dursuns} follows directly from Theorem~\ref{thm:Dursuns} by the fact $(\ell+1)/2\in\mathbb{Q}$.
\begin{crly}\label{crly:Dursuns}
Vectors $\bu\in\{0,1\}^{\ell}$ and $\bv\in\{0,1\}^{\ell}$ constitute an LP if 
$$
\PSD(\bv,\delta)+\PSD(\bu,\delta)=\frac{\ell+1}{2} \ \forall \delta \mid \ell
$$
\end{crly}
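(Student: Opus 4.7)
The plan is to reduce the full complementarity condition---which by equation~\eqref{eqn:CompPSD} requires $\PSD(\bv,g)+\PSD(\bu,g)=(\ell+1)/2$ at every $g\in\Zl\setminus\{0\}$---to the assumed identity at the divisors of $\ell$, invoking Theorem~\ref{thm:Dursuns} once per divisor. The engine of the reduction is the elementary observation that every nonzero residue modulo $\ell$ belongs to exactly one orbit of the form $\{r\delta:r\in\mathbb{Z}_{\ell/\delta}^{\times}\}$ as $\delta$ runs over the proper divisors of $\ell$.

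Concretely, for each $g\in\Zl\setminus\{0\}$ I would set $\delta:=\gcd(g,\ell)$ and $r:=g/\delta$, so that $\delta$ is a proper divisor of $\ell$ and $r\in\mathbb{Z}_{\ell/\delta}^{\times}$. Identifying $\delta_2=\delta$ and $\delta_1=\ell/\delta$ places the situation precisely in the setting of Theorem~\ref{thm:Dursuns}: the hypothesis provides $\PSD(\bv,\delta)+\PSD(\bu,\delta)=(\ell+1)/2$, and the rationality condition $\bigl(\PSD(\bv,\delta)+\PSD(\bu,\delta)\bigr)\in\mathbb{Q}$ is immediate since $(\ell+1)/2\in\mathbb{Q}$. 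Theorem~\ref{thm:Dursuns} then yields $\PSD(\bv,r\delta)+\PSD(\bu,r\delta)=\PSD(\bv,\delta)+\PSD(\bu,\delta)=(\ell+1)/2$, which is the desired identity at $g$.

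Letting $g$ range over $\Zl\setminus\{0\}$, every PSD-sum constraint in equation~\eqref{eqn:CompPSD} is satisfied with constant $\gamma=(\ell+1)/2$; via Wiener--Khinchin this is equivalent to the autocorrelation condition~\eqref{eqn:Complementary} with $\lambda=(\ell+1)/2$, so together with the $\{0,1\}$-density $\kappa=(\ell+1)/2$ inherent in the LP search framework, $(\bu,\bv)$ is a Legendre pair. No serious obstacle arises: the analytic and Galois-theoretic content is entirely absorbed into Theorem~\ref{thm:Dursuns}, and what remains is only the number-theoretic partition $\Zl\setminus\{0\}=\bigcup_{\delta\mid\ell,\,\delta<\ell}\{r\delta:r\in\mathbb{Z}_{\ell/\delta}^{\times}\}$ together with the trivial rationality check that activates the theorem.
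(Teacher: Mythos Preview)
Your proposal is correct and follows essentially the same approach as the paper: the paper's proof is the single sentence ``Corollary~\ref{crly:Dursuns} follows directly from Theorem~\ref{thm:Dursuns} by the fact $(\ell+1)/2\in\mathbb{Q}$,'' and you have simply spelled out the mechanism by which Theorem~\ref{thm:Dursuns} covers every $g\in\Zl\setminus\{0\}$ via the partition $\Zl\setminus\{0\}=\bigcup_{\delta\mid\ell,\ \delta<\ell}\{r\delta:r\in\mathbb{Z}_{\ell/\delta}^{\times}\}$. Your handling of the density hypothesis is a mild hand-wave, but no more so than the paper's, which does not address it either.
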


It follows that if $(\bp^1,\bq^1)$ and $(\bp^2,\bq^2)$ are complementary sequences with simultaneous decompression $\bv$ and $\bu$ respectively, then $(\bv,\bu)$ constitute an LP if $$\PSD(\bv,1)+\PSD(\bu,1)=\gamma.$$

%%%%%%%%%%%%%%%%%%%%%%%%%%%%%%%%%%%%%%%%%%%%%%%%%%%%%%%%%%%%%%%%%%%%%%%%%%%%%%%%%%%%%%%%%%%%%%%%%%%%%%%%%%%%%%%%%%%%%%%%

\section{A recursive method
\label{sec:FixedMargGenerationAlg}}

Several algorithms exist for enumerating the BMFM solution space.  Snijders' \cite{snijders1991enumeration} method recursively sets each matrix element according to a binary branching strategy based on the well known relation \begin{equation*}\label{eqn:bincoRecurse}
 \nchoosek{\ell}{k}=\nchoosek{\ell-1}{k}+\nchoosek{\ell-1}{k-1}.   
\end{equation*}
This equation is generalized to BMFM by Theorem~\ref{thm:RecurseCount}.

\begin{thm}
\label{thm:RecurseCount}
Let $\bq$ and $\bp$ be integer vectors of length $\delta_1$ and $\delta_2$ respectively.  Let $N(\bq,\bp)$ be the size of the solution set to BMFM defined by $\bp$ and $\bq$.  \begin{equation*}\label{eqn:recurseCount}
    N(\bq,\bp)=N(\bp,\bq)=\sum_{j=0}^{{{\delta_2} \choose {q_0}}-1}N(\bq_{1:\delta_1-1},\bp-\boldsymbol{r}_j)
\end{equation*}
where $\boldsymbol{r}_j$ is the binary vector of the $j$th ranked subset of size $q_0$ chosen from $\{0,1,\dots,\delta_2-1\}$.
\end{thm}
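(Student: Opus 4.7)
The plan is to prove the identity by two essentially standard combinatorial arguments: transposition for the symmetry $N(\bq,\bp)=N(\bp,\bq)$, and a partition-by-first-row argument for the recursion.

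First, to establish $N(\bq,\bp)=N(\bp,\bq)$, I observe that transposition is an involution on the set of $\{0,1\}$-matrices, and it exchanges the row-sum vector with the column-sum vector. Thus the map $\bA\mapsto \bA^{\top}$ is a bijection between the BMFM solution set for $(\bq,\bp)$ (matrices of size $\delta_1\times\delta_2$ with row sums $\bq$ and column sums $\bp$) and the BMFM solution set for $(\bp,\bq)$ (matrices of size $\delta_2\times\delta_1$ with row sums $\bp$ and column sums $\bq$). This immediately gives the first equality.

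Second, for the recursion, I partition the BMFM solution set for $(\bq,\bp)$ according to the first row of $\bA$. The first row is itself a binary vector of length $\delta_2$, and since its entries must sum to $q_0$, it lies in the set of $\binom{\delta_2}{q_0}$ binary vectors of length $\delta_2$ with exactly $q_0$ ones. Indexing these vectors as $\boldsymbol{r}_0,\boldsymbol{r}_1,\dots,\boldsymbol{r}_{\binom{\delta_2}{q_0}-1}$ in some fixed ranking, I consider the map that sends $\bA$ to its last $\delta_1-1$ rows. For the class of matrices whose first row is $\boldsymbol{r}_j$, this map is a bijection onto the set of $(\delta_1-1)\times\delta_2$ binary matrices whose row sums are $\bq_{1:\delta_1-1}$ (the tail of $\bq$) and whose column sums are $\bp-\boldsymbol{r}_j$, since the column sums of the full matrix minus the contribution of the first row give precisely the column sums of the tail submatrix. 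This last set has size $N(\bq_{1:\delta_1-1},\bp-\boldsymbol{r}_j)$ by definition.

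Summing the sizes of the classes in the partition yields the claimed identity. There is no real obstacle; the only subtlety is making sure that the column-sum residuals $\bp-\boldsymbol{r}_j$ are admissible (i.e., nonnegative) for every $j$ in the range. This is automatic: if some entry of $\bp-\boldsymbol{r}_j$ is negative, then no $(\delta_1-1)\times\delta_2$ binary matrix can realize the residual column sums, so $N(\bq_{1:\delta_1-1},\bp-\boldsymbol{r}_j)=0$, and the corresponding term contributes nothing to either side, leaving the recursion valid as stated.
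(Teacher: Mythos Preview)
Your argument is correct and is exactly the standard one: transposition gives the symmetry $N(\bq,\bp)=N(\bp,\bq)$, and partitioning the solution set by the first row gives the recursion, with infeasible residuals $\bp-\boldsymbol{r}_j$ contributing zero. The paper does not actually supply a proof of this theorem; it is stated as a known generalization of the binomial recurrence $\binom{\ell}{k}=\binom{\ell-1}{k}+\binom{\ell-1}{k-1}$ underlying Snijders' branching method, so there is nothing further to compare.
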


We call a BMFM infeasible if $N(\bq,\bp)=0$. It follows that a branch may be fathomed during matrix generation if the reduced problem is determined to be infeasible.  The next theorem provides the necessary and sufficient condition for determining if a BFMF is feasible (\cite{brualdi1980matrices}, pg.162).

\begin{thm}\label{thm:InfeasibleSet}
Let $\bq$ and $\bp$ be integer vectors of length $\delta_1$ and $\delta_2$ respectively.  $N(\bq,\bp)>0$ if and only if $t_{\bq,\bp}(I,J)\geq 0$ for all $I\subset\{1,\dots,\delta_1\}$ and ${J\subset\{1,\dots,\delta_2\}}$, where  \begin{equation*}
t_{\bq,\bp}(I,J)=|I||J|+\sum_{i\not\in I}q_i-\sum_{j\in J}p_j.
\end{equation*}
\end{thm}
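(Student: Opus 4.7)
The plan is to prove the two directions by very different routes: necessity by a short double-counting argument on the matrix, and sufficiency by reducing to a max-flow / min-cut computation on an associated bipartite transportation network. Throughout I assume the background balance $\sum_i q_i = \sum_j p_j$, which is forced by feasibility in the setting of Theorem~\ref{thm:RecurseCount} and is implied by combining the $t$-conditions with the symmetric identity $N(\bq,\bp) = N(\bp,\bq)$ (take $I=\emptyset$, $J=\{1,\dots,\delta_2\}$ in each).

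For necessity, let $\bA$ be a $\{0,1\}$-matrix realizing $(\bq,\bp)$, fix any $I \subset \{1,\dots,\delta_1\}$ and $J \subset \{1,\dots,\delta_2\}$, and let $\sigma$ denote the number of $1$'s in the submatrix of $\bA$ with rows indexed by the complement of $I$ and columns indexed by $J$. Counting $\sigma$ row-by-row yields $\sigma \leq \sum_{i \notin I} q_i$, and counting $\sigma$ column-by-column yields $\sigma \geq \sum_{j \in J} p_j - |I||J|$, since each column $j \in J$ contains $p_j$ ones, at most $|I|$ of which can lie in the rows indexed by $I$. Chaining these inequalities gives $t_{\bq,\bp}(I,J) \geq 0$ immediately.

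For sufficiency, I would build a flow network $G$ with source $s$, sink $t$, row-nodes $r_1,\dots,r_{\delta_1}$, column-nodes $c_1,\dots,c_{\delta_2}$, arcs $s \to r_i$ of capacity $q_i$, arcs $c_j \to t$ of capacity $p_j$, and unit-capacity arcs $r_i \to c_j$ for every $(i,j)$. Since all capacities are integral, the integrality theorem for max-flow guarantees an integer-valued optimal flow, and any such flow of value $\sum_i q_i$ corresponds bijectively (by reading off which $r_i \to c_j$ arcs carry one unit) to a solution of the BMFM. By max-flow / min-cut it therefore suffices to show every $s$–$t$ cut has capacity at least $\sum_i q_i = \sum_j p_j$. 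Any such cut $(S,\bar S)$ is encoded by the set $I$ of rows with $r_i \in S$ and the set $J$ of columns with $c_j \in \bar S$; enumerating the three families of cut-arcs gives cut capacity $\sum_{i \notin I} q_i + |I||J| + \sum_{j \notin J} p_j$, and demanding that this be at least $\sum_j p_j$ rearranges to exactly $t_{\bq,\bp}(I,J) \geq 0$.

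The main obstacle is purely bookkeeping: making sure the $S$-side row-set truly plays the role of $I$ and the $\bar S$-side column-set truly plays the role of $J$ in the formula for $t_{\bq,\bp}$, and that the three arc-contributions assemble in the claimed way. A more elementary but more delicate alternative would argue by induction on $\delta_1$ using Theorem~\ref{thm:RecurseCount}: peel off the first row, subtract a suitable $\{0,1\}$ vector from $\bp$, and verify that the $t$-condition is preserved for some valid Hall-type choice of which $q_0$ columns receive a $1$ in the removed row. Making that selection step work under only the hypothesis $t \geq 0$ is the principal technical difficulty of the inductive route, which is why the max-flow reduction is the cleaner approach.
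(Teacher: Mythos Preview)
Your proof is correct. The paper itself does not prove this statement at all; it is quoted as a known result from Brualdi's combinatorial matrix theory text (page~162), so there is no in-paper argument to compare against.

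For the record, both halves of your argument are the standard ones. The necessity direction is precisely the double-counting bound underlying Gale--Ryser, and your sufficiency direction via max-flow/min-cut on the bipartite degree-realization network is the cleanest modern proof; your cut bookkeeping is right, with the $S$-side row-nodes giving $I$, the $\bar S$-side column-nodes giving $J$, and the three arc families contributing $\sum_{i\notin I}q_i$, $|I||J|$, and $\sum_{j\notin J}p_j$ exactly as you state. The inductive alternative you sketch (peel off a row, make a Hall-type choice of columns, check the $t$-condition persists) is essentially Ryser's original argument, and you correctly identify the selection step as the delicate part there.
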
 

A necessary first step to employing a decompression based search is to locate all pairs of candidate compressed sequences that satisfy equation \eqref{eqn:CompConstPSD}.  By equation \eqref{eqn:chiIso}, the search for compressed complementary sequences can be initially reduced to searching across decimation class representatives.  It is well known that if $\bv$ and $\bu$ are complementary sequences, then $\bv$ and $d_{-1}(\bu)$ are also complementary \cite{Fletcher2001}.  This holds true for compressed sequences as well.  Rather, if $\bp^1$ and $\bq^1$ are compressed complementary sequences of size $\delta_1$, there exists  $S\subset\mathbb{Z}_{\delta_1}^{\times}$ such that each $s\in S$ is not a multiplier of $\bq^1$ but $PSD(d_s(\bq^1))=PSD(\bq^1)$.  Similarly,  there exists  $T\subset\mathbb{Z}_{\delta_2}^{\times}$ such that each $t\in T$ is not a multiplier of $\bq^2$ but $PSD(d_t(\bq^2))=PSD(\bq^2)$.  It follows that not all LP will arise as simultaneous decompressions from a single complementary pair of decimation class representatives.  

There are two options for handling this required duplicity of decimation class representation.  The first approach is to determine all $j\in\Zlt$ such that $(j \text{ mod }\delta_1)\in S$ and $(j \text{ mod }\delta_2)\in T$.  Each decompression of $\bp^1,\bp^2$ must be decimated by each $j$ and compared to the corresponding $\bq^1,\bq^2$ to determine if the decompressions, or decimations thereof, are complementary.  
The second approach is to conduct simultaneous decompression on each decimation of the set of complementary sequences, 
\begin{equation}\label{eqn:CompSets}
    \{(\bp^1,\bp^2),(d_s(\bq^1),d_t(\bq^2))\} \ \text{ for each } \ (s,t)\in (S\cup 1)\times (T\cup 1).
\end{equation}  Notice only one simultaneous decompression in each pair must be decimated.  We chose the latter approach as it has greater performance for conducting calculations in parallel.

We use the recursive method discussed by Brualdi~(\cite{brualdi1980matrices}, pg.195) based on Theorem \ref{thm:RecurseCount}  to enumerate the set of decompressions for fixed marginal vectors $\bq^1,\bq^2$.  This method recursively sets rows and columns of $\bA$, thereby reducing the BMFM's dimensions.  The method may be summarized as three stages.  The first stage determines if the BMFM is infeasible by Theorem \ref{thm:InfeasibleSet} and the search branch may be fathomed.  The second stage determines if there is a {\em trivial} assignment, such as a row that must filled with ones or a column that must be filled with zeros.  In this same stage, the method determines if the trivial assignments complete the matrix.  The third stage selects a row/column to branch on and generates all possible binary vectors of the corresponding size and density.  

We include DFT calculations at stages two and three of this method to improve computational efficiency of implementing the PSD test on completed decompressions.  PSD-based comparisons were used for our partial search over $\ell=77$.  Our enumerative LP search uses equation~\eqref{eqn:Complementary} for comparisons.  The latter stores two integer digits for each index and eliminates errors due to computational precision as $\sum_{j=0}^{\ell-1}v_jv_{j+g}$ is necessarily integer.  However, O$(\ell^2)$ calculations must be conducted for each decompression, reducing overall efficiency.

\section{Results}

\subsection{Enumeration for $\ell=55$}

An exhaustive search for LP of length $\ell=55$ was conducted. We identified $17$ inequivalent $5$-compression pairs and $2051$ inequivalent $11$-compression pairs satisfying equation \eqref{eqn:CompConstPSD}.  These were expanded to $31$ and $3,038$ pairs respectively per the decimations defined in equation~\eqref{eqn:CompSets}.  Each $5$ and $11$ pair combination yield $4$ BMFM for a total of $376,712$.  The total time required to generate the solution set of all BMFM is $793.3$hrs on a 16 cores 2.4Ghz computer running $128$ threads, or $101,542.4$ CPU hours.

A total of $18,042$ inequivalent LP were identified spanning $36,050$ decimation classes.  Known solutions were verified (up to decimation class equivalence) to be in the solution set \cite{balonin2020three,chiarandini2008heuristic,turner2019cocyclic}.

The {\em correlation energy} of solutions $(\bu,\bv)$, defined as:
\begin{equation*}
    \rho_{\bv}=\sum_{j=1}^{\frac{\ell-1}{2}}P_j(\bv)^2
\end{equation*}
have a range of $[5952, 8262]$ with a mode at $6942$.  The difference in observed consecutive correlation energy values is $2\ell$, the theoretical minimum change.  For comparison, the correlation energy of known $\ell=55$ LP are (6722)\cite{balonin2020three}, (7382)\cite{chiarandini2008heuristic}, and (7052, 7162, 7602)\cite{turner2019cocyclic}.  The tightest known lower bound on minimum correlation energy is $5292$, and is known to not be tight for $\ell=55$ as there exists no $(55,28,14)$ difference set.  Rather, no binary vector of size $\ell=55$ and density $\kappa=28$ is self-complementary \cite{arasu1991mann}. 

\begin{figure}
    \centering
    \includegraphics[scale=0.60]{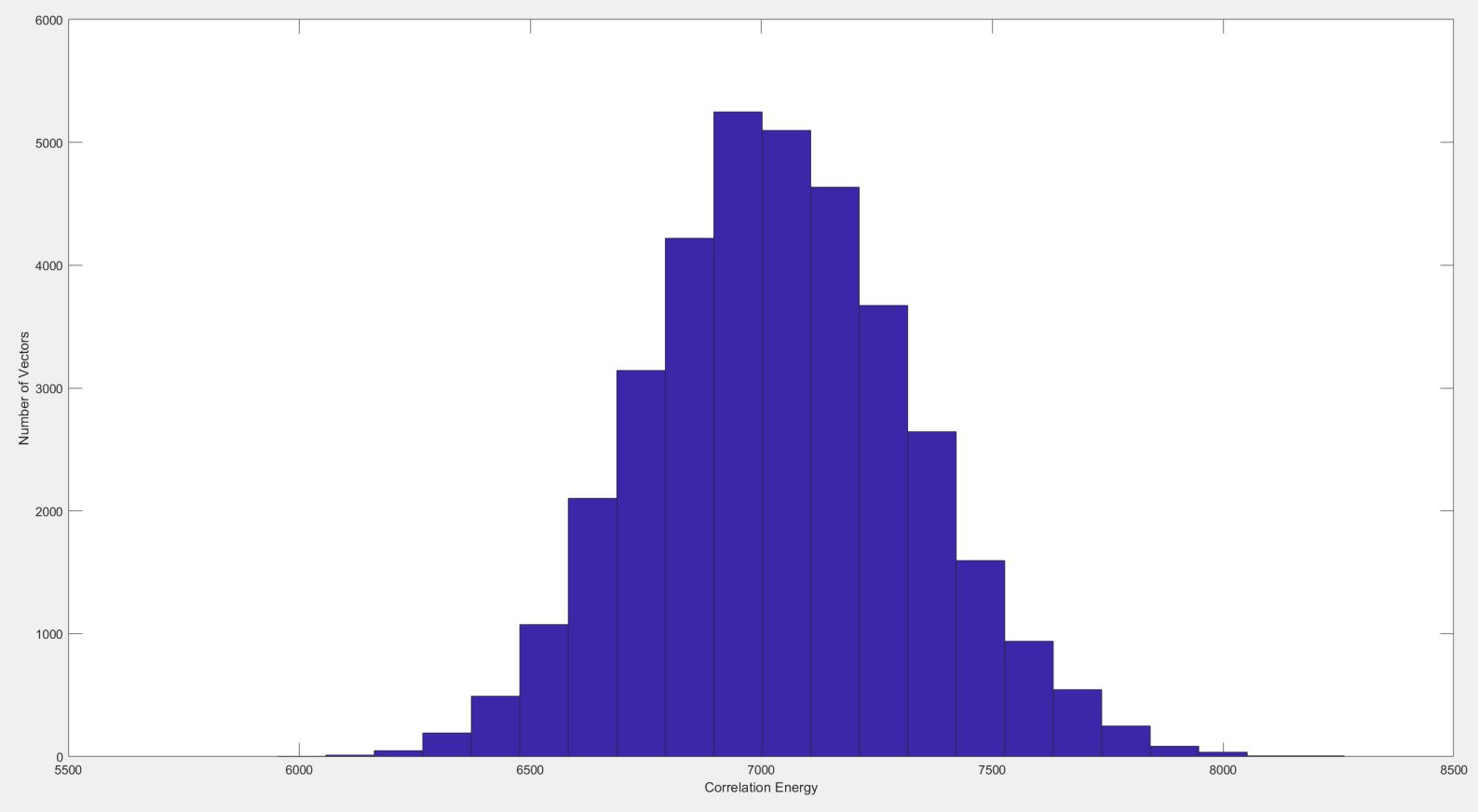}
    \caption{Histogram of correlation energy for $\ell=55$ vectors belonging to an LP}
    \label{fig:CorrEng}
\end{figure}

\subsection{Results for $\ell=77$}

A partial search for LP of length $77$ with $\delta_1=7$ and $\delta_2=11$ produced the LP in Table \ref{tab:L77Sols}.  This search was conducted with 168 threads on an Intel I5-3570 CPU, 4 core, 3.40 GHz processor, and required approximately 182,280 CPU hours.
%September 25 0900 to November 9th 1300

\begin{table}[ht]
    \centering
{\footnotesize{
\begin{eqnarray*}
    \bu=\begin{matrix}
    [0,1,0,1,0,1,1,0,1,0,0,1,1,1,1,1,0,1,0,1,1,0,0,1,1,0,0,1,0,1,0,0,0,1,1,0,0,0,0, \\
    0,1,1,1,1,1,0,1,1,0,1,0,1,0,0,1,0,0,1,0,1,0,1,1,1,1,1,0,1,0,0,0,1,0,0,0,0,1]^\top
    \end{matrix}\\
    \bv=\begin{matrix}
    [0,1,1,0,0,1,1,0,0,1,1,0,1,1,1,0,0,0,0,1,1,1,1,1,0,1,1,1,1,1,0,0,1,0,1,1,0,1,1, \\
    0,0,0,0,0,0,1,1,1,0,0,0,1,0,0,0,0,1,1,0,1,0,0,0,1,0,0,0,1,1,0,0,1,1,1,1,0,1]^\top
    \end{matrix}
\end{eqnarray*}
}}
    \caption{$\ell=77$ LP (Indices: $(1,\dots,39)$ top row, $(40,\dots,77)$ bottom row)}
    \label{tab:L77Sols}
\end{table}

\section*{Acknowledgement}
This research was sponsored by AFIT Graduate School of Engineering and Management's Faculty Research Council, grant 2019-213.  The views expressed in this article are those of the authors and do not reflect the official policy or position of the United States Air Force, Department of
 Defense, or the U.S. Government
%% The Appendices part is started with the command \appendix;
%% appendix sections are then done as normal sections
%% \appendix

%% \section{}
%% \label{}

%% References
%%
%% Following citation commands can be used in the body text:
%% Usage of  \cite is as follows:
%%    \cite{key}          ==>>  [#]
%%    \cite[chap. 2]{key} ==>>  [#, chap. 2]
%%    \citet{key}         ==>>  Author [#]

%% References with bibTeX database:
\newpage
\bibliographystyle{plain}
\bibliography{BibTex.bib}

%% Authors are advised to submit their bibtex database files. They are
%% requested to list a bibtex style file in the manuscript if they do
%% not want to use model1-num-names.bst.

%% References without bibTeX database:

% \begin{thebibliography}{00}

%% \bibitem must have the following form:
%%   \bibitem{key}...
%%

% \bibitem{}

% \end{thebibliography}

\end{document}